\newcommand{\BB}{{\cal B}}
\newcommand{\EE}{{\cal E}}
\newcommand{\FF}{{\cal F}}
\newcommand{\MM}{{\cal M}}
\newcommand{\BM}{{\mathbb M}}
\newcommand{\BN}{{\mathbb N}}
\newcommand{\BR}{{\mathbb R}}
\newcommand{\BBM}{{\mathbf M}}
\newcommand{\fch}{{\mathbf{1}}}
\newtheorem{theorem}{\bf Theorem}[section]
\newtheorem{proposition}[theorem]{\bf Proposition}
\newtheorem{lemma}[theorem]{\bf Lemma}%[subsection]
\theoremstyle{definition}
\newtheorem{definition}[theorem]{Definition}
\newtheorem{remark}[theorem]{Remark}
\numberwithin{equation}{section}
\begin{document}

\title {Renormalized solutions of semilinear elliptic equations
with general measure data}
\author {Tomasz Klimsiak and Andrzej Rozkosz}
\date{}
\maketitle
\begin{abstract}
In the paper, we first propose a definition of renormalized
solution of semilinear elliptic equation involving operator
corresponding to a general (possibly nonlocal) symmetric regular
Dirichlet form satisfying the so-called absolute continuity
condition and general (possibly nonsmooth) measure data. Then we
analyze the relationship between our definition and other concepts
of solutions considered in the literature (probabilistic
solutions, solution defined via the resolvent kernel of the
underlying Dirichlet form, Stampacchia's definition by duality).
We show that under mild integrability assumption on the data all
these concepts coincide.
\end{abstract}
{\small{\bf Keywords:} Semilinear elliptic equation, Dirichlet
form and operator, measure data, renormalized solution.}
\medskip\\
{\small{\bf Mathematics Subject Classification (2010)}. Primary:
35D99. Secondary: 35J61, 60H30.}

\footnotetext{T. Klimsiak: Institute of Mathematics, Polish
Academy of Sciences, \'Sniadeckich 8, 00-956 Warszawa, Poland, and
Faculty of Mathematics and Computer Science, Nicolaus Copernicus
University, Chopina 12/18, 87-100 Toru\'n, Poland. E-mail:
tomas@mat.umk.pl.}

\footnotetext{A. Rozkosz: Faculty of
Mathematics and Computer Science, Nicolaus Copernicus University,
Chopina 12/18, 87-100 Toru\'n, Poland. E-mail: rozkosz@mat.umk.pl.}

\section{Introduction}
\label{sec1}

Let $L$ be the operator associated with a symmetric regular
Dirichlet form $(\EE,D(\EE))$ on $L^2(E;m)$,
$f:E\times\BR\rightarrow\BR$ be a measurable function and $\mu$ be
a bounded signed Borel measure on $E$. In the paper we consider
semilinear equations of the form
\begin{equation}
\label{eq1.1} -Lu=f(\cdot,u)+\mu\quad\mbox{in }E.
\end{equation}
One of the important problems that arises when studying such
equations is the problem of proper definition of a solution. This
problem has been dealt  with by many authors. In the present paper
we first introduce  yet another definition of a solution of
(\ref{eq1.1}). It is a  slight modification of the definition of a
renormalized solution introduced in \cite{KR:NoD} in case $\mu$ is
smooth. Then we analyze the relationship between this new
definition and other concepts of solutions known in the
literature.

In case $L$ is a uniformly elliptic divergence form operator and
$f$ does not depend on $u$, some definition, now called
Stampacchia's definition by duality, was proposed by Stampacchia
\cite{S} in 1965. Later on, to deal with equations with more
general local operator $L$, the definitions of entropy solution
and renormalized solution were introduced. For a comparison of
different forms of these definitions and remarks on other concepts
of solutions of equations of the form (\ref{eq1.1}) with local
operator $L$ and $f$ not depending on $u$ see \cite{DMOP}. Elliptic equations
with local operators and nonlinear dependence on general measure data are studied in \cite{DDO,MP}.

In case $f$ depends on $u$ most of known results are devoted to
the case where $\mu$ is smooth. Recall (see \cite{FST}) that $\mu$
admits a unique decomposition
\begin{equation}
\label{eq1.2} \mu=\mu_d+\mu_c
\end{equation}
into the smooth (diffuse) part $\mu_d$ and the concentrated part
$\mu_c$, i.e. $\mu_d$ is a bounded Borel measure, which is
``absolutely continuous" with respect to the capacity $\mbox{Cap}$
determined by $(\EE,D(\EE))$, and $\mu_c$ is a bounded Borel
measure which is ``singular" with respect to $\mbox{Cap}$. In case
$L$ is local and $\mu$ is smooth entropy and renormalized
solutions of (\ref{eq1.1}) are studied in numerous papers (see,
e.g., \cite{BBGGPV,DP} and the references given there). A
definition of renormalized solutions  applicable to (\ref{eq1.1})
with  general  $L$ associated with a general transient (possibly
non-symmetric) Dirichlet form was recently given in \cite{KR:NoD}.
If $(\EE,D(\EE))$ is symmetric and $f(\cdot,u)\in L^1(E;m)$,
renormalized solutions in the sense of \cite{KR:NoD} coincide with
probabilistic solutions of (\ref{eq1.1}) defined earlier in
\cite{KR:JFA} (see also \cite{KR:CM} for
equations with operator $L$ associated with a non-symmetric
quasi-regular form and \cite{KR:PA} for equations with nonlinear dependence on measure data). Recall that a measurable $u:E\rightarrow\BR$
is a probabilistic solution of (\ref{eq1.1}) in the sense of
\cite{KR:JFA,KR:CM} if the following nonlinear Feynman-Kac formula
\begin{equation}
\label{eq1.3} u(x)=E_x\Big(\int^{\zeta}_0f(X_t,u(X_t))\,dt
+\int^{\zeta}_0dA^{\mu}_t\Big)
\end{equation}
is satisfied for quasi-every $x\in E$. In (\ref{eq1.3}),
$\BBM=(X,P_x)$ is a Markov process with life time $\zeta$
associated with $\EE$, $E_x$ denotes the expectation with respect
to $P_x$ and $A^{\mu}$ is the continuous additive functional of
$\BBM$ associated with $\mu$ in the Revuz sense (see Section
\ref{sec2}). The equivalence between renormalized and
probabilistic solutions allows one to use effectively
probabilistic methods in the study of renormalized solutions of
(\ref{eq1.1}). Also note that if $f\in L^1(E;m)$ then renormalized
solutions of (\ref{eq1.1}) coincide with Stampacchia's solutions
by duality defined in \cite{KR:JFA,KR:CM}.

The semilinear case with general, possibly nonsmooth bounded
measure $\mu$ is much more involved. The study of (\ref{eq1.1})
with nonsmooth measure was initiated in 1975 by Brezis and
B\'enilan in case $L$ is the Laplace operator $\Delta$ (see
\cite{BB,BMP} and the references given there for results and
historical comments). For some existence and uniqueness results in
case $L$  is the fractional Laplacian $\Delta^{\alpha/2}$ with
$\alpha\in(0,2)$ see Chen and V\'eron \cite{CV}. Very recently,
Klimsiak \cite{K:CVPDE} started the study of (\ref{eq1.1}) in case
$L$ corresponds to  a transient symmetric regular Dirichlet form
satisfying  the following absolute continuity condition:
\begin{description}
\item[\rm(ACR)]$R_{\alpha}(x,\cdot)$ is absolutely continuous with
respect to $m$ for each $\alpha>0$ and $x\in E$,
\end{description}
where $R_{\alpha}(x,dy)$ denotes the resolvent kernel associated
with $(\EE,D(\EE))$ (see Section \ref{sec2.2}). Equivalently,
\begin{description}
\item[\rm(ACT)]$p_t(x,\cdot)$ is absolutely continuous with
respect to $m$ for each $t>0$ and $x\in E$,
\end{description}
where $p_t(x,dy)$ is the transition function associated with
$(\EE,D(\EE))$. The above conditions are satisfied for instance if
$L$ is a uniformly divergence form operator or
$L=\Delta^{\alpha/2}$ with $\alpha\in(0,2)$. If the form is
transient, then under (ACR) the resolvent kernel $R_0(x,dy)$ has a
density $r$. In \cite{K:CVPDE} a measurable function $u$ on $E$ is
called a solution of (\ref{eq1.1}) if
\begin{equation}
\label{eq1.4} u(x)=\int_E r(x,y)f(y,u(y))\,dy+\int_E
r(x,y)\,\mu(dy)
\end{equation}
for quasi every $x\in E$. In case $\mu_c=0$, the above equation
reduces to (\ref{eq1.3}), so the definition of \cite{K:CVPDE}
reduces to the probabilistic definition of a solution given in
\cite{KR:JFA,KR:CM}. In \cite{K:CVPDE} also a partly probabilistic
interpretation of (\ref{eq1.4}) is given. This suggests that
solutions defined via the resolvent density, i.e. by
(\ref{eq1.4}), may be equivalently defined as renormalized
solutions in the same manner as in \cite{KR:NoD}. In the present
paper we show that this is indeed possible. The definition of a
renormalized solution adopted in the present paper is a minor
modification of the definition of \cite{KR:NoD}. In our opinion,
it is natural, especially from the probabilistic point of view.
Moreover, in many cases considered so far in the literature ($\mu$
is smooth or $\mu$ is nonsmooth and $L=\Delta$ or
$L=\Delta^{\alpha/2}$, like in \cite{BMP,CV})  the solutions
considered there coincide with the renormalized defined in the
present paper.

The main result of the paper says that if the form is transient
and (ACR) is satisfied then the renormalized solution is a
solution in the sense of (\ref{eq1.4}), and if $u$ is a solution
of (\ref{eq1.1}) in the sense of (\ref{eq1.4}) and $u\in L^1(E;m)$
then $u$ is a renormalized solution. We find important that, as in
the case of smooth measures, this correspondence when combined
with probabilistic interpretation of (\ref{eq1.4}) given in
\cite{K:CVPDE} enables one to study renormalized solutions of
(\ref{eq1.1}) with the help of probabilistic methods. For results
on (\ref{eq1.1}) obtained in this way we defer the reader to
\cite{K:CVPDE}). Finally, note that at the end of the paper we
describe some interesting situations in which solutions of
(\ref{eq1.1}) in the sense of (\ref{eq1.4}) automatically have the
property that $f(\cdot,u)\in L^1(E;m)$.

\section{Preliminaries}
\label{sec2}

In the paper  $E$ is  a separable locally compact metric space and
$m$ is a Radon measure on $E$ such that supp$[m]=E$. By $\BB(E)$
(resp. $\BB^+(E)$) we denote the set of all real (resp.
nonnegative) Borel measurable functions on $E$, and by $\BB_b(E)$
the subset of $\BB(E)$ consisting of all bounded functions.

For $u:E\rightarrow\BR$ we set $u^+(x)=\max\{u(x),0\}$,
$u^-(x)=\max\{-u(x),0\}$.

\subsection{Dirichlet forms}

By $(\EE,D(\EE))$ we denote a symmetric regular Dirichlet form on
$H=L^2(E;m)$ (see \cite[Section 1.1]{FOT} for the definition). In
case  $(\EE,D(\EE))$ is transient,  by $(D_e(\EE),\EE)$ we denote
the  extended Dirichlet space of $(\EE,D(\EE))$ (see \cite[Section
1.5]{FOT}).

In the paper, we define capacity $\mbox{Cap}$ as in \cite[Section
2.1]{FOT}. Recall that an increasing sequence $\{F_n\}$ of closed
subsets of $E$ is called nest if Cap$(E\setminus F_n)\rightarrow
0$ as $n\rightarrow \infty$. A subset $N\subset E$ is called
exceptional if Cap$(N)=0$. We will say that some property of
points in $E$ holds quasi everywhere (q.e. for short) if the set
for which it does not hold is exceptional.

We say that a function $u$ on $E$ is quasi-continuous if there
exists a nest $\{F_n\}$ such that $u_{|F_n}$ is continuous for
every $n\ge 1$. By \cite[Theorem 2.1.7]{FOT}, each function $u\in
D_e(\EE)$ has a quasi-continuous $m$-version.

Let $\mu$ be a signed Borel measure on $E$, and let
$|\mu|=\mu^{+}+\mu^-$, where $\mu^+$ (resp. $\mu^-$) we denote the
positive (resp. negative) part of %the Jordan decomposition
of $\mu$. We say that $\mu$ is smooth if $|\mu|$ does not charge
exceptional sets and there exists a nest $\{F_n\}$ such that
$|\mu|(F_n)<\infty$, $n\ge 1$. The set of all smooth measures on
$E$ will be denoted by $S$. By $\MM_b$ we denote the set of all
signed Borel measures on $E$ such that
$\|\mu\|_{TV}:=|\mu|(E)<\infty$, and by $\MM_{0,b}$ the subset of
$\MM_b$ consisting of all smooth measures. $S^+$ is the subset of
$S$ consisting of nonnegative measures. Similarly we define
$\MM^+_b,\MM^+_{0,b}$. By \cite[Lemma 2.1]{FST}, for every
$\mu\in\MM_b$ there exists a unique pair
$(\mu_d,\mu_c)\in\MM_b\times\MM_b$ such that $\mu_d\in\MM_{0,b}$,
$\mu_c$ is concentrated on some exceptional Borel subset of $E$
and (\ref{eq1.2}) is satisfied. If $\mu$ is nonnegative, so are
$\mu_d,\mu_c$. For a complete description of the structure of
$\mu_c$ see \cite{KR:PAMS}.

\subsection{Markov processes}
\label{sec2.2}

Let $E\cup\Delta$ be the one-point compactification of $E$. When
$E$ is already compact, we adjoin $\Delta$ to $E$ as an
isolated point. We adopt the convention that every function $f$ on
$E$ is extended to $E\cup\{\Delta\}$ by setting $f(\Delta)=0$.

By \cite[Theorems 4.2.8, 7.2.3]{FOT} there exists a unique (up to
equivalence) $m$-symmetric Hunt process
$\BBM=(\Omega,\FF,(\FF_t)_{t\ge0},(X_t)_{t\ge0},\zeta, (P_x)_{x\in
E\cup\Delta})$ with state space $E$, life time $\zeta$ and
cemetery state $\Delta$ whose Dirichlet space is $(\EE,D(\EE))$.
This means in particular  that for every $\alpha>0$ and $f\in
\BB_b(E)\cap H$ the resolvent of $\BBM$, that is the function
\[
R_{\alpha}f(x)=E_x\int^{\infty}_0e^{-\alpha t}f(X_t)\,dt,\quad
x\in E
\]
is a quasi-continuous $m$-version of $G_{\alpha}f$.

Let $R_{\alpha}(x,dy)$ denote the kernel on $(E,\BB(E))$ defined
as $R_{\alpha}(x,B)=R_{\alpha}\fch_B(x)$. In the paper we will
assume that $\BBM$ satisfies (ACR) condition formulated in Section
\ref{sec1}. By \cite[Theorem 4.2.4]{FOT}, for symetric forms
considered in the present paper (ACR) is equivalent to (ACT). In
general, for non-symmetric forms, (ACT) is stronger than (ACR).
Also note that in the literature (ACR) is sometimes  called
Meyer's hypothesis (L) (see \cite[Chapter I, Exercise 10.25]{Sh}

Assume that $(\EE,D(\EE))$ is transient. Then there exists a
nonnegative $\BB(E)\otimes\BB(E)$-measurable function $r:E\times
E\rightarrow\BR$ such that $r(x,y)=r(y,x)$, $x,y\in E$ and for
every Borel set $B\subset E$,
\[
R(x,B)=\int_Br(x,y)\,m(dy),\quad x\in E.
\]
In fact, $r(x,y)=\lim_{\alpha\downarrow0}r_{\alpha}(x,y)$, where
$r_{\alpha}(x,y)$ is the density of $R_{\alpha}(x,dy)$ constructed
in \cite[Lemma 4.2.4]{FOT} (see remarks in \cite[p. 256]{BG}). We
call $r$ the resolvent density.

In what follows given a positive Borel measure on $E$, we write
\[
R_{\alpha}\mu(x)=\int_Er_{\alpha}(x,y)\,\mu(dy),\qquad R\mu(x)=\int_Er(x,y)\,\mu(dy),\quad x\in E,\quad\alpha>0.
\]
For a signed Borel measure $\mu$ on $E$, we set
$R\mu(x)=R\mu^+(x)-R\mu^-(x)$, whenever $R\mu^+(x)<+\infty$ or
$R\mu^-(x)<+\infty$, and we adopt the convention that
$R\mu(x)=+\infty$ if $R\mu^+(x)=R\mu^-(x)=+\infty$.

\begin{proposition}
\label{prop2.1} Assume that $(\EE,D(\EE))$ is transient and
\mbox{\rm(ACR)} is satisfied. If $\mu\in\MM_b$ then
$R|\mu|(x)<+\infty$ for q.e. $x\in E$.
\end{proposition}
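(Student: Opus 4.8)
The plan is to reduce the claim to the transience property of $(\EE,D(\EE))$, namely that there is a strictly positive $g\in\BB^+(E)$ with $Rg<\infty$ $m$-a.e.\ (equivalently q.e.), and to the fact that for a bounded measure the potential $R|\mu|$ cannot be identically $+\infty$ on a non-exceptional set. Since $R|\mu|=R\mu^++R\mu^-$ and $\mu^\pm\in\MM_b^+$, it suffices to treat a nonnegative $\mu\in\MM_b^+$. First I would exploit (ACR): under (ACR) the process $\BBM$ admits a resolvent \emph{density} $r_\alpha$, and passing $\alpha\downarrow0$ gives the resolvent density $r$ with $R\mu(x)=\int_E r(x,y)\,\mu(dy)$ well defined (in $[0,+\infty]$) for \emph{every} $x\in E$, not just q.e. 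The function $x\mapsto R\mu(x)$ is then excessive (a limit of the $\alpha$-potentials $R_\alpha\mu$), hence finely continuous and, in particular, quasi-continuous on $\{R\mu<\infty\}$.

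Next I would show $R\mu$ is finite q.e. The standard route: since the form is transient, fix $h\in\BB^+(E)$ strictly positive with $m$-a.e.\ finite $Rh$ and with $\langle h, Rh\rangle_m<\infty$ (such $h$ exists by \cite[Theorem 1.5.1 and Lemma 1.5.1]{FOT}; one may even take $h$ bounded with $\int_E h\,dm<\infty$). By symmetry of $r$ and Tonelli,
\[
\int_E Rh\,d\mu=\int_E\!\!\int_E r(x,y)h(y)\,m(dy)\,\mu(dx)
=\int_E\!\!\int_E r(y,x)\,\mu(dx)\,h(y)\,m(dy)=\int_E (R\mu)\,h\,dm.
\]
The left-hand side is finite because $Rh$ is a bounded (or at least $\mu$-integrable) function — here I would use that a $1$-excessive, hence finely continuous, bounded potential of a nice $h$ is bounded, or truncate $h$ further so $Rh\in\BB_b(E)$ using \cite[Lemma 4.2.4]{FOT} — and $\mu(E)<\infty$. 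Hence $\int_E (R\mu)\,h\,dm<\infty$, and since $h>0$ $m$-a.e.\ this forces $R\mu<\infty$ $m$-a.e.

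Finally I would upgrade ``$m$-a.e.'' to ``q.e.'' The set $N=\{R\mu=+\infty\}$ is nearly Borel and finely open (as the complement of the fine-continuity domain of the excessive function $R\mu$, or directly because $R\mu$ is excessive so $\{R\mu=\infty\}$ is finely closed and its complement finely open — one checks which way one needs it); the key point is that $N$ is $m$-negligible, and for an excessive function an $m$-polar finely-closed set is polar, hence exceptional. Concretely: $R\mu$ being excessive, the reduced function (balayage) identity gives $P_x(\sigma_N<\infty)=0$ for q.e.\ $x$ once $m(N)=0$, because $\BBM$ is $m$-symmetric and charges no $m$-null set instantaneously under (ACR); this is exactly the statement that $m$-polar sets are polar for symmetric Dirichlet forms (\cite[Theorem 4.1.2 or 4.2.1]{FOT}), so $\mbox{Cap}(N)=0$.

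The main obstacle is the last step — the passage from $m$-a.e.\ finiteness to q.e.\ finiteness — together with the technical care needed to guarantee the reference function $h$ simultaneously is strictly positive, has bounded (or $\mu$-integrable) potential, and lies in the extended Dirichlet space so that Fubini and the symmetry of $r$ apply rigorously; the role of (ACR) is precisely to make $R\mu$ a genuine pointwise-defined excessive function so that ``$m$-polar $\Rightarrow$ polar'' can be invoked.
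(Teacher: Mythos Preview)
The paper does not prove this proposition; it simply refers to \cite[Proposition 3.2]{K:CVPDE}. So there is no argument in the paper to compare yours against, and I can only assess your outline on its own.

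The architecture is right: reduce to $\mu\ge0$, pair $R\mu$ against a strictly positive reference function via the symmetry of $r$ and Tonelli, get $m$-a.e.\ finiteness, then upgrade to q.e.\ using that $R\mu$ is excessive. The genuine gap is the sentence ``the left-hand side is finite because $Rh$ is a bounded (or at least $\mu$-integrable) function''. Since $\mu\in\MM_b$ is \emph{not} assumed smooth, its concentrated part $\mu_c$ may sit on an exceptional set, so knowing $Rh<\infty$ q.e.\ (which is all FOT's transience reference function delivers) says nothing about $\int Rh\,d\mu_c$. You would need $h>0$ with $Rh$ bounded \emph{everywhere}, and neither \cite[Lemma~4.2.4]{FOT} nor the transience lemma \cite[Theorem~1.5.1]{FOT} provides this; for Brownian motion in $\BR^d$, $d\ge3$, one can write such $h$ down explicitly, but in the generality of the paper this needs an argument you have not supplied. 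One way to repair the step is to note that $R_11\le1$ everywhere, so $\int_E R_1\mu\,dm=\int_E R_11\,d\mu\le\|\mu\|_{TV}<\infty$, hence $R_1\mu\in L^1(E;m)$; then the resolvent identity $R\mu=R_1\mu+R(R_1\mu)$ reduces the problem to showing $Rf<\infty$ $m$-a.e.\ for the \emph{function} $f=R_1\mu$, where the non-smoothness of $\mu$ is no longer an issue.

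A minor correction in your last paragraph: excessive functions are finely continuous with values in $[0,+\infty]$, so $\{R\mu=+\infty\}$ is finely \emph{closed}, not finely open. Your upgrade from $m$-a.e.\ to q.e.\ still works, though, via the supermartingale argument you indicate: if $R\mu(x)<\infty$ then optional stopping gives $P_x(\sigma_{\{R\mu=\infty\}}<\infty)=0$, so the set is $m$-polar and hence exceptional by \cite[Theorem~4.1.2]{FOT}.
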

\begin{proof}
See \cite[Proposition 3.2]{K:CVPDE}.
\end{proof}

Denote by $\BM$ the set of all signed Borel measures $\mu$  on $E$
such that $R|\mu|(x)<+\infty$ for $m$-a.e. $x\in E$. By
Proposition \ref{prop2.1}, $\MM_b\subset\BM$. In general, the
inclusion is strict (see the remark following \cite[Proposition
3.2]{KR:CM}).

We define additive functional (AF  in abbreviation) and continuous AF of $\BBM$
as in \cite[Sections 5.1]{FOT}. By \cite[Theorem 5.1.4]{FOT},
there is a one to one correspondence
(called Revuz correspondence) between the set of smooth  measures
$\mu$ on $E$ and the set of  positive continuous AFs $A$ of $\BBM$. It is given by the
relation
\[
\lim_{t\rightarrow0^+}\frac1{t}E_m\int^t_0f(X_s)\,dA_s=\int_Ef(x)\,\mu(dx),
\quad f\in\BB^+(E),
\]
where $E_m$ denotes the expectation with respect to the measure
$P_m(\cdot)=\int_EP_x(\cdot)\,m(dx)$. In what follows the positive
continuous AF of $\BBM$ corresponding to a positive $\mu\in S$ will be denoted
by $A^{\mu}$. If $\mu$ in $S$, then $\mu^+,\mu^-\in S$, and we set
$A^{\mu}=A^{\mu^+}-A^{\mu^-}$. Note that if $\mu\in S^+$ then for
every $\alpha\ge0$,
\begin{equation}
\label{eq2.1} R_{\alpha}\mu(x)=E_x\int^{\zeta}_0e^{-\alpha t}\,d
A^{\mu}_t=E_x\int^{\infty}_0e^{-\alpha t}\,d A^{\mu}_t
\end{equation}
for q.e. $x\in E$. Indeed, if $\alpha>0$ and $\mu$ is a measure of
finite 0-order energy integral ($\mu\in S^{(0)}_0$ in notation;
see \cite[Section 2.2]{FOT} for the definition), then
(\ref{eq2.1}) follows from Exercise 4.2.2 and Lemma 5.1.3 in
\cite{FOT}. The general case follows by approximation. We first
let $\alpha\downarrow0$ to get (\ref{eq2.1}) for $\alpha\ge0$ and
$\mu\in S^{(0)}_0$, and then we use the 0-order version of
\cite[Theorem 2.2.4]{FOT} (see remark following \cite[Corollary
2.2.2]{FOT}) to get (\ref{eq2.1}) for any $\alpha\ge0$ and $\mu\in
S^+$.

\section{Probabilistic solutions and solutions defined via the
resolvent density}
\label{sec3}

We assume that $(\EE,D(\EE))$ is transient and \mbox{\rm(ACR)} is
satisfied. Consider the problem
\begin{equation}
\label{eq3.1} -Lu=f_u+\mu,
\end{equation}
where $f:E\times\BR\rightarrow\BR$ is a measurable function,
$f_u=f(\cdot,u)$, $\mu\in\BM$ and $L$ is the operator associated
with $(\EE,D(\EE))$, i.e. the nonpositive definite self-adjoint
operator on $H$ such that
\[
D(L)\subset D(\EE),\qquad \EE(u,v)=(-Lu,v),\quad u\in D(L),v\in
D(\EE),
\]
where $(\cdot,\cdot)$ denotes the usual inner product in $H$ (see
\cite[Corollary 1.3.1]{FOT}).

The following two definitions of solutions of (\ref{eq3.1}) were
introduced in \cite{K:CVPDE}.

\begin{definition}
\label{def3.1} We say that a measurable  function
$u:E\rightarrow\BR\cup\{-\infty,+\infty\}$ is a solution of
(\ref{eq1.1}) if $f_u\cdot m\in\BM$ and (\ref{eq1.4}) is satisfied
for q.e. $x\in E$.
\end{definition}

\begin{definition}
\label{def3.2} We say that a measurable
$u:E\rightarrow\BR\cup\{-\infty,+\infty\}$ is a probabilistic
solution of (\ref{eq1.1}) if
\begin{enumerate}
\item[(a)] $f_u\cdot m\in\BM$ and there exists an AF $M$ of $\BBM$ such that such
that for q.e. $x\in E$ the process $M$ is an $(\FF)_{t\ge0}$-local martingale under $P_x$ and
\begin{equation}
\label{eq2.2}  u(X_t)=u(X_0)-\int_0^tf_u(X_s)\,ds
-\int^t_0dA^{\mu_d}_s+\int_0^t\,dM_s, \quad t\ge 0,\quad
P_x\mbox{-a.s.}
\end{equation}
\item[(b)] for every exceptional set $N\subset E$, every stopping time
$T$ such that $T\ge\zeta$ and  every sequence
$\{\tau_k\}\subset\mathcal{T}$ such that $\tau_k\nearrow T$ and
$E_x\sup_{t\le \tau_k}|u(X_t)|<\infty$ for all $x\in E\setminus N$
and $k\ge 1$, we have
\begin{equation}
\label{eq3.3} E_xu(X_{\tau_k})\rightarrow R\mu_c(x),\quad x\in
E\setminus N.
\end{equation}
\end{enumerate}

Any sequence $\{\tau_k\}$ with the  properties listed in condition
(b) will be called the reducing sequence for $u$, and we will say
that $\{\tau_k\}$ reduces $u$.
\end{definition}

\begin{remark}
(i) By \cite[Renark 3.10]{K:CVPDE}, if $\mu_c=0$, then
the above  definition reduces to the definition introduced in \cite{KR:JFA}.
%In particular, for q.e. $x\in E$  we have %$u(x)=E_x\int^{\zeta}_0f_u(X_t)\,dt+\int^{\zeta}_0\,dA^{\mu}_t$.
\smallskip\\
(ii) Assume that $u$ is a probabilistic solution of (\ref{eq1.1}). Then
for q.e. $x\in E$ we have
\begin{equation}
\label{eq3.4} E_xu^+(X_{\tau_k})\rightarrow R\mu^+_c(x),\qquad
E_xu^-(X_{\tau_k})\rightarrow R\mu^-_c(x).
\end{equation}
Indeed, if $u$ is a solution of (\ref{eq1.1}) then by
\cite[Theorem 6.3]{K:CVPDE}, $Lu^+\in\BM$. In different words,
$u^+$ is a solution of the equation $Lu^+=\nu$ with some $
\nu\in\BM$. Hence, by condition (b) of Definition \ref{def3.2},
$E_xu^+(X_{\tau_k})\rightarrow R\nu_c(x)$ for q.e. $x\in E$. But
by \cite[Theorem 6.3]{K:CVPDE}, $(Lu^+)_c=(Lu)^+_c$. Hence
$\nu_c=(f_u\cdot m+\mu)^+_c=\mu^+_c$, which proves the first
convergence in (\ref{eq3.4}). The second convergence follows from
the first one and (\ref{eq3.3}).
\end{remark}

\begin{proposition}
\label{prop3.4} Let $\mu\in\BM$. A measurable
$u:E\rightarrow\BR\cup\{-\infty,+\infty\}$ is a solution of
\mbox{\rm(\ref{eq1.1})} in the sense of Definition
\mbox{\rm\ref{def3.1}} if and only if it is a solution of
\mbox{\rm(\ref{eq1.1})} in the sense of Definition
\mbox{\rm\ref{def3.2}}.
\end{proposition}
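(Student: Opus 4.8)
The plan is to prove the equivalence by establishing the two implications separately, exploiting the probabilistic interpretation of the resolvent density already available from \cite{K:CVPDE}. Throughout I work under the standing assumption that $(\EE,D(\EE))$ is transient and (ACR) holds, so that the resolvent density $r$ exists and, by Proposition \ref{prop2.1}, $R|\nu|(x)<+\infty$ q.e. for $\nu\in\MM_b$; moreover $R\mu_c$, $R\mu^\pm_c$ are well defined q.e. The key analytic identity is that for $\nu\in S^+$ one has $R\nu(x)=E_x\int_0^\zeta dA^\nu_t$ q.e., which is (\ref{eq2.1}) with $\alpha=0$; this lets me translate between the potential $R\nu$ and the additive functional $A^\nu$.

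\emph{Definition \ref{def3.2} implies Definition \ref{def3.1}.} Suppose $u$ is a probabilistic solution. First I would decompose $\mu=\mu_d+\mu_c$ and, writing $g=f_u$, note that $g\cdot m+\mu_d$ is a (possibly signed) measure in $\BM$ whose diffuse part drives the additive-functional term in (\ref{eq2.2}). The idea is to integrate (\ref{eq2.2}) along the trajectory up to a reducing time $\tau_k$, take $E_x$, and let $k\to\infty$: the stochastic-integral term has expectation zero (after a localization argument using the reducing sequence, which is exactly what makes $M$ a genuine martingale on $[0,\tau_k]$), the additive-functional term converges by monotone/dominated convergence to $E_x\int_0^\zeta(g(X_s)\,ds+dA^{\mu_d}_s)=R(g\cdot m+\mu_d)(x)$ via (\ref{eq2.1}), and $E_xu(X_{\tau_k})\to R\mu_c(x)$ by condition (b). Rearranging yields $u(x)=R(g\cdot m+\mu_d)(x)+R\mu_c(x)=\int_E r(x,y)f(y,u(y))\,dy+\int_E r(x,y)\,\mu(dy)$ q.e., which is (\ref{eq1.4}). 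The integrability $f_u\cdot m\in\BM$ is part of hypothesis (a), so Definition \ref{def3.1} is verified. Care is needed to justify splitting into $u^+$ and $u^-$ so that the expectations make sense even when $u$ takes infinite values on a polar set; here the remark preceding the proposition, giving $E_xu^\pm(X_{\tau_k})\to R\mu^\pm_c(x)$, is the right tool.

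\emph{Definition \ref{def3.1} implies Definition \ref{def3.2}.} Now suppose (\ref{eq1.4}) holds with $f_u\cdot m\in\BM$. Set $\nu=f_u\cdot m+\mu\in\BM$, so $u=R\nu$ q.e., and decompose $\nu=\nu_d+\nu_c$ with $\nu_d=f_u\cdot m+\mu_d$ and $\nu_c=\mu_c$. The potential $R\nu_d$ of a diffuse measure is (a quasi-continuous version of) an element that can be represented through the additive functional $A^{\nu_d}$: one has the stochastic representation $R\nu_d(X_t)=R\nu_d(X_0)-\int_0^t dA^{\nu_d}_s+M^d_t$ for a local MAF $M^d$, which is the probabilistic Riesz-type decomposition of a potential (this is standard Dirichlet-form/Markov-process theory and is the form of (\ref{eq2.2}) one expects). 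Adding the contribution of the concentrated part $R\nu_c=R\mu_c$, which is a harmonic-type term in the sense that it is annihilated along the process except for its behavior near $\zeta$, produces a local MAF $M$ with $u(X_t)=u(X_0)-\int_0^t f_u(X_s)\,ds-\int_0^t dA^{\mu_d}_s+M_t$, giving condition (a). For condition (b), given a reducing sequence $\{\tau_k\}$ one computes $E_xu(X_{\tau_k})=E_x R\nu(X_{\tau_k})=E_x\big[E_{X_{\tau_k}}\int_0^\zeta dA^{\nu_d}_s\big]+E_xR\mu_c(X_{\tau_k})$; the first term is $E_x\int_{\tau_k}^\zeta dA^{\nu_d}_s\to 0$ as $\tau_k\nearrow T\ge\zeta$ by dominated convergence, while the concentrated-part term, being a potential of a measure carried by a polar set, satisfies $E_xR\mu_c(X_{\tau_k})\to R\mu_c(x)$; combining gives (\ref{eq3.3}).

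\emph{Main obstacle.} The delicate point in both directions is the passage to the limit in $E_xu(X_{\tau_k})$ and the identification of the concentrated part: one must show that the ``boundary term at $\zeta$'' produced by $u=R\nu$ is exactly $R\mu_c$ and not something involving $\nu_d$, and that the martingale part contributes nothing in the limit. This requires handling the fact that $u$ need not be quasi-integrable globally — only along reducing times — so all limiting arguments must be localized through $\{\tau_k\}$ and split via $u^\pm$, using the finiteness $R|\nu|<\infty$ q.e. from Proposition \ref{prop2.1} together with the result $Lu^\pm\in\BM$ and $(Lu)^\pm_c=(Lu^\pm)_c$ from \cite[Theorem 6.3]{K:CVPDE} invoked in the remark. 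Once the bookkeeping of diffuse versus concentrated parts is pinned down, the rest is a routine application of (\ref{eq2.1}) and optional stopping.
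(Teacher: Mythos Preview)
The paper does not prove this proposition here; it defers entirely to \cite[Proposition~3.12]{K:CVPDE}. Your outline is therefore not comparable line by line to anything in the present paper, but it is the natural probabilistic argument and almost certainly tracks the proof in \cite{K:CVPDE}: integrate the semimartingale decomposition (\ref{eq2.2}) up to a reducing time and pass to the limit using condition~(b) for one direction; for the converse, split $R\nu=R\nu_d+R\mu_c$, recognise $R\nu_d(X_\cdot)$ as a potential with additive-functional representation via (\ref{eq2.1}), and treat $R\mu_c(X_\cdot)$ as a local martingale (since $\mu_c$ is carried by a polar set the process never visits under (ACR)).

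The one step I would sharpen is your justification of $E_xR\mu_c(X_{\tau_k})\to R\mu_c(x)$. The phrase ``being a potential of a measure carried by a polar set'' explains why $t\mapsto R\mu_c(X_t)$ is a \emph{local} martingale on $[0,\zeta)$, but not why its expectation at $\tau_k$ recovers $R\mu_c(x)$ rather than something strictly smaller (a nonnegative local martingale is only a supermartingale in general). The missing ingredient is precisely the reducing-sequence hypothesis $E_x\sup_{t\le\tau_k}|u(X_t)|<\infty$: together with the corresponding control on $R\nu_d(X_\cdot)$ coming from $R|\nu_d|<\infty$ q.e., it forces the stopped process $R\mu_c(X_{\cdot\wedge\tau_k})$ to be uniformly integrable, so optional stopping gives $E_xR\mu_c(X_{\tau_k})=R\mu_c(x)$ for every $k$, and the limit is trivial. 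You flag this passage as the ``main obstacle'' but stop short of isolating the UI mechanism; once that is made explicit, the argument goes through.
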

\begin{proof}
See \cite[Proposition 3.12]{K:CVPDE}.
\end{proof}

In what follows for a function $u$ on $E$ and a measure $\mu$ on $E$, we set
\[
\langle\mu,u\rangle=\int_Eu(x)\mu(dx)
\]
whenever the integral is well defined, and for $k\ge0$,  we write
\[
T_ku(x)=\max\{\min\{u(x),k\},-k\},\quad x\in E.
\]

\begin{remark}
\label{rem2.2} (i) By \cite[Theorem 3.7]{K:CVPDE}, if $u$ is a
solution of (\ref{eq1.1}) then  $u$ is quasi-continuous.
\smallskip\\
(ii) Let $u$ be a solution of (\ref{eq1.1}) with $\mu\in\MM_b$. If
$f_u\in L^1(E;m)$ then by \cite[Theorem 3.3]{K:CVPDE}, $T_ku\in
D_e(\EE)$ for every $k\ge0$.  If, in addition, $m(E)<\infty$ or
$\EE$ satisfies Poincar\'e type inequality then $T_ku\in D(\EE)$
for $k\ge0$ (see \cite[Remark 3.4]{K:CVPDE}).
\end{remark}

In closing this section we recall yet another concept of solutions
introduced in \cite{K:CVPDE}.

We say that $u:E\rightarrow\BR\cup\{-\infty,+\infty\}$ is a
solution of (\ref{eq1.1}) in the sense of Stampacchia if for every
$v\in\BB(E)$ such that $\langle|\mu|,R|v|\rangle<\infty$ the integrals
$(u,v)$, $f_u\cdot m,Rv)$ are finite and
\[
(u,v)=(f_u,Rv)+\langle \mu,Rv\rangle.
\]
By \cite[Proposition 4.12]{K:CVPDE}, if $\mu\in\BM$, then $u$ is a
solution of (\ref{eq1.1}) in the sense of Stampacchia if and only
if it is a solution of (\ref{eq1.1}) in the sense of Definition
\ref{def3.1}.

\section{Renormalized solutions} \label{sec4}

As in Section \ref{sec3}, in this section we assume that
$(\EE,D(\EE))$ is transient and \mbox{\rm(ACR)} is satisfied. As
for the right-hand side of (\ref{eq1.1}), we restrict our
considerations to bounded measures.

The following definition
extends \cite[Definition 3.1]{KR:NoD} to possibly nonsmooth
measures.

\begin{definition}
\label{def4.1}
Let $\mu\in\MM_{b}(E)$. We say that
$u:E\rightarrow\BR\cup\{-\infty,+\infty\}$ is a renormalized
solution of (\ref{eq1.1}) if
\begin{enumerate}
\item[(a)]$u$ is quasi-continuous, $f_u\in L^1(E;m)$
and $T_ku\in D_e(\EE)$ for every $k\ge0$,
\item[(b)] there exists a sequence
$\{\nu_k\}\subset\MM_{0,b}(E)$ such that $R\nu_k\rightarrow
R\mu_c$ q.e. as $k\rightarrow\infty$, and for every $k\in\BN$ and
every bounded $v\in D_e(\EE)$,
\begin{equation}
\label{eq3.20} \EE(T_ku,v) =\langle f_u\cdot m+\mu_d,\tilde
v\rangle + \langle\nu_k,\tilde v\rangle.
\end{equation}
\end{enumerate}
\end{definition}

Note that in the case of local operators, the above definition  is essentially \cite[Definition 2.29]{DMOP}. A similar in spirit definition of  renormalized solutions of parabolic equations with local Leray-Lions type operators is considered
in \cite[Definition 4.1]{PPP} (in case $\mu_c=0$) and  \cite[Definition 3]{PP} (in the case of general bounded measures).

In  case  $\mu_c=0$, Definition \ref{def4.1} reduces to
\cite[Definition 3.1]{KR:NoD} with the exception that in
\cite{KR:NoD} in condition (b) it is required that
$\|\nu_k\|_{TV}\rightarrow0$. Note that in the case where $\mu_c\neq0$
the condition $R\nu_k\rightarrow R\mu_c$ q.e.
cannot be replaced by the  condition $\|\nu_k-\mu_c\|_{TV}\rightarrow0$ because the limit, in the total variation norm,  of diffuse measures is diffuse. Also, if $\mu_c\neq0$, then $\|\nu_k\|_{TV}\nrightarrow0$, because by \cite[Lemma 2.5]{KR:NoD2}, if $\|\nu_k\|_{TV}\rightarrow0$, then there is a subsequence $\{\nu_{k'}\}$ such that $R\nu_{k'}\rightarrow0$ q.e.
We see that the difference between the case $\mu_c=0$ and $\mu_c\neq0$ is  quite similar to that for parabolic equations considered in \cite{PPP,PP} (cf. \cite[Definition 4.1]{PPP} and \cite[Definition 3]{PP}).

\begin{remark}
(i) Let $E\subset \BR^d$ be a bounded domain, and let $L$ be the
Laplace operator $\Delta$ on $E$ with zero  boundary conditions.
By \cite[Remark 4.15]{K:CVPDE}, if $u$ is a renormalized solution
of (\ref{eq1.1}), then $u$ is a weak solution in the sense of
\cite{BMP}.
\smallskip\\
(ii) Let $\alpha\in(0,2]$, $E\subset \BR^d$ be a bounded domain,
and let $L$ be the fractional Laplacian  $\Delta^{\alpha/2}$ on
$E$ with zero boundary conditions. By \cite[Remark 4.13]{K:CVPDE},
if $u$ is a renormalized solution of (\ref{eq1.1}), then $u$ is a
solution of (\ref{eq1.1}) in the sense of \cite[Definition
1.1]{CV}.
\end{remark}

The following lemma is a modification of \cite[Lemma 5.4]{KR:JFA}.
As compared with \cite[Lemma 5.4]{KR:JFA}, we do not assume that
$\mu$ is smooth, but we additionally  require that the form
satisfies (ACT).

\begin{lemma}
\label{lem4.2} Assume that $\nu\in\BM\cap S^+$, $\mu\in\MM^+_{b}$.
If $R\nu\le R\mu$ $m$-a.e. then $\nu\in\MM^+_{0,b}$. In fact,
$\|\nu\|_{TV}\le\|\mu\|_{TV}$.
\end{lemma}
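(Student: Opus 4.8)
The plan is to exploit the probabilistic representation of $R\nu$ and $R\mu$ together with the Revuz correspondence. Since $\nu\in\BM\cap S^+$ and $\mu\in\MM_b^+\subset S^+$, both measures are smooth, so by the Revuz correspondence there are positive CAFs $A^{\nu}$ and $A^{\mu}$ of $\BBM$ with $R\nu(x)=E_x\int_0^{\zeta}dA^{\nu}_t$ and $R\mu(x)=E_x\int_0^{\zeta}dA^{\mu}_t$ for q.e.\ $x$, by (\ref{eq2.1}) with $\alpha=0$. The hypothesis $R\nu\le R\mu$ $m$-a.e., hence q.e.\ (both sides being quasi-continuous excessive functions, the $m$-a.e.\ inequality upgrades to q.e.), then reads $E_x\int_0^{\zeta}dA^{\nu}_t\le E_x\int_0^{\zeta}dA^{\mu}_t$ for q.e.\ $x$.

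First I would pass to finite horizons: for each $\alpha>0$, from $R_\alpha\nu\le R_\alpha\mu$ (which follows from the $\alpha=0$ inequality by the resolvent equation, or directly by monotonicity) one gets $E_x\int_0^\infty e^{-\alpha t}\,dA^\nu_t\le E_x\int_0^\infty e^{-\alpha t}\,dA^\mu_t$ for q.e.\ $x$. Integrating against $m$ and using the Revuz definition of the CAFs, namely $\lim_{t\to0^+}\frac1t E_m\int_0^t f(X_s)\,dA_s=\int_E f\,d(\text{measure})$, one can compare the measures directly. The cleanest route is: apply the defining Revuz limit to the inequality $E_m\int_0^t dA^\nu_s\le E_m\int_0^t dA^\mu_s$, which holds for every $t\ge0$ because integrating the pointwise inequality $E_x\int_0^\infty e^{-\alpha s}dA^\nu_s\le E_x\int_0^\infty e^{-\alpha s}dA^\mu_s$ against $m$, then letting $\alpha\downarrow 0$ and using that $A^\nu$ grows, gives $E_m A^\nu_t\le E_m A^\mu_t$; dividing by $t$ and sending $t\to0^+$ yields $\nu(E)\le\mu(E)$, i.e.\ $\|\nu\|_{TV}\le\|\mu\|_{TV}<\infty$. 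Since $\nu$ is smooth and now has finite total mass, $\nu\in\MM^+_{0,b}$, which is the first assertion.

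For the localized comparison (to make the argument airtight one should compare $\nu$ and $\mu$ not just in total mass but on each Borel set, though for the statement as written only $\nu\in S$ plus finite mass is needed): for $f\in\BB^+(E)$ bounded, the same chain applied to the CAFs $\int_0^\cdot f(X_s)\,dA^\nu_s$ and $\int_0^\cdot f(X_s)\,dA^\mu_s$ — whose Revuz measures are $f\cdot\nu$ and $f\cdot\mu$ — would give $\int f\,d\nu\le\int f\,d\mu$, hence $\nu\le\mu$ as measures; in particular $\|\nu\|_{TV}=\nu(E)\le\mu(E)=\|\mu\|_{TV}$. One must check here that the domination $A^\nu_t\le A^\mu_t$ $P_m$-a.s.\ (not merely in expectation) is available, which follows because $R\nu\le R\mu$ q.e.\ forces $R(\mu-\nu)$ to be a nonnegative excessive function, so $\mu-\nu$ is a positive smooth measure and its CAF is $A^\mu-A^\nu$, which is therefore nondecreasing.

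The main obstacle is the role of (ACT): it is needed precisely to guarantee that all the relevant objects ($R_\alpha\nu$, $R_\alpha\mu$, the resolvent density $r$) are genuine functions and that the $m$-a.e.\ hypothesis $R\nu\le R\mu$ can be upgraded to a q.e.\ statement, so that the probabilistic identities (\ref{eq2.1}), which hold only q.e., can be combined with it. Without absolute continuity one would be stuck with $m$-a.e.\ information that need not survive the passage to the process-theoretic side. A secondary technical point is justifying the interchange of limits ($\alpha\downarrow0$, $t\to0^+$, $E_m$-integration) — this is handled by monotone convergence, since all integrands and additive functionals are nonnegative and nondecreasing. I expect the write-up to be short once the reduction to $E_m A^\nu_t\le E_m A^\mu_t$ is in place.
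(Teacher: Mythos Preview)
There is a genuine gap at the very first step. You write ``$\mu\in\MM_b^+\subset S^+$, both measures are smooth'', but this inclusion is false: $\MM_b^+$ is the class of all bounded nonnegative Borel measures, while $\MM_{0,b}^+=\MM_b^+\cap S^+$ is the (strictly smaller) subclass of smooth ones. The whole point of this lemma, as the paper says explicitly in the sentence preceding it, is that here $\mu$ is \emph{not} assumed smooth. Thus $\mu$ may carry a nontrivial concentrated part $\mu_c$, there is in general no CAF $A^{\mu}$ in Revuz correspondence with $\mu$, and the identity $R\mu(x)=E_x\int_0^{\zeta}dA^{\mu}_t$ from (\ref{eq2.1}) is simply unavailable. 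Every subsequent step in your argument (comparison of $E_mA^{\nu}_t$ with $E_mA^{\mu}_t$, the Revuz limit for $\mu$, the claim that $\mu-\nu$ is a positive smooth measure) rests on this nonexistent CAF and therefore collapses.

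You have also misidentified the role of the absolute continuity hypothesis. It is not primarily there to upgrade an $m$-a.e.\ inequality to a q.e.\ one; rather, (ACR) is what furnishes the symmetric resolvent \emph{density} $r(x,y)$, so that $R\mu(x)=\int_E r(x,y)\,\mu(dy)$ is defined for arbitrary $\mu\in\MM_b^+$ without any smoothness assumption. The paper's proof exploits exactly this: one chooses functions $g_n\ge 0$ with $Rg_n\nearrow 1$ (namely $g_n=n(1-nR_n1)$), uses the symmetry of $r$ together with Fubini to turn $\langle g_n,R\mu\rangle$ into $\langle Rg_n,\mu\rangle$ (and likewise for $\nu$), and then the $m$-a.e.\ inequality $R\nu\le R\mu$ suffices to compare the integrals against $g_n\,dm$ directly. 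Passing to the limit gives $\nu(E)\le\mu(E)$. No probabilistic representation of $R\mu$ is needed, and none is available.
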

\begin{proof}
Set $ g_n=n(1-nR_n1)$. Then by the resolvent identity,
\[
Rg_n=nR_n1\le 1,\quad n\ge1.
\]
Since by \cite[Chapter II, Proposition (2.2)]{BG} the constant
function 1 is excessive relative to $\BBM$, $g_n\ge 0$ and, by
\cite[Chapter II, Proposition (2.3)]{BG}, $Rg_n\nearrow 1$. Since
the resolvent density $r$ is symmetric, applying Fubini's theorem
we get
\begin{align*}
\langle\mu,Rg_n\rangle
&=\int_E\!\Big(\int_Er(x,y)g_n(y)\,dy\Big)\,\mu(dx)\\
&=\int_E\!\Big(\int_Er(y,x)\,\mu(dx)\Big)\,g_n(y)\,dy =\langle
g_n,R\mu\rangle.
\end{align*}
Likewise, $\langle\nu,Rg_n\rangle=\langle g_n,R\nu\rangle$. Since
$R\nu\le R\mu$ $m$-a.e., it follows from the above that
\[
\langle\mu,Rg_n\rangle\ge\langle\nu,Rg_n\rangle,\quad n\ge1.
\]
Therefore
\[
\|\nu\|_{TV}=\lim_{n\rightarrow \infty}\langle Rg_n,\nu\rangle \le
\lim_{n\rightarrow \infty}\langle Rg_n,\mu\rangle=\|\mu\|_{TV},
\]
which proves the lemma.
\end{proof}

\begin{theorem}
\label{th4.3} Let $\mu\in\MM_{b}$.
\begin{enumerate}
\item[\rm(i)]If $u$ is a probabilistic solution of \mbox{\rm(\ref{eq1.1})}
and $f_u\in L^1(E;m)$ then $u$ is a renormalized solution of
\mbox{\rm(\ref{eq1.1})}.
\item[\rm(ii)]If $u$ is a renormalized solution of
\mbox{\rm(\ref{eq1.1})} then $u$ is a probabilistic solution of
\mbox{\rm(\ref{eq1.1})}.
\end{enumerate}
\end{theorem}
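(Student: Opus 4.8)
The plan is to pass between the two characterizations through the probabilistic objects supplied by Section~\ref{sec3}, using the equivalence (Proposition~\ref{prop3.4}) between probabilistic solutions and solutions in the sense of Definition~\ref{def3.1}, and the regularity facts recalled in Remark~\ref{rem2.2}. Recall that for a probabilistic solution with $f_u\in L^1(E;m)$ we automatically have $u$ quasi-continuous and $T_ku\in D_e(\EE)$ for every $k\ge0$ by Remark~\ref{rem2.2}, so condition~(a) of the definition of a renormalized solution is free in part~(i); conversely, in part~(ii) condition~(a) of the renormalized definition directly hands us the analytic regularity needed to run the probabilistic arguments.

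For part~(i) I would start from the decomposition $\mu=\mu_d+\mu_c$ and the Feynman--Kac/additive-functional representation underlying Definition~\ref{def3.2}. Applying $L$ to the truncation $T_ku$, one knows (this is the truncation calculus already used in \cite{K:CVPDE}, cf.\ \cite[Theorem~6.3]{K:CVPDE} and Remark~\ref{rem2.2}) that $-LT_ku$ is a bounded measure; write it as $f_u\fch_{\{|u|<k\}}\cdot m+\lambda_k$ for a suitable remainder measure, and identify its diffuse and concentrated parts. The diffuse part will be $f_u\fch_{\{|u|<k\}}\cdot m+\mu_d$ up to a term supported where $u$ hits $\pm k$, and the concentrated part must coincide with $\mu_c$ for every $k$ because truncation does not affect the singular behaviour at polar sets. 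This gives the variational identity \eqref{eq3.20} with $\nu_k$ defined as the diffuse measure $-LT_ku-(f_u\cdot m+\mu_d)$ restricted appropriately; one then checks $\nu_k\in\MM_{0,b}(E)$ (it is diffuse by construction and bounded because $\|LT_ku\|_{TV}$ is controlled via $f_u\in L^1$ and $\|\mu\|_{TV}$), and that $R\nu_k\to R\mu_c$ q.e.\ The convergence $R\nu_k\to R\mu_c$ is exactly the analytic shadow of \eqref{eq3.3}--\eqref{eq3.4}: since $R\nu_k(x)=E_x u(X_{\sigma_k})$-type quantities up to the $L^1$ drift, the reducing-sequence convergence in Definition~\ref{def3.2}(b), together with Proposition~\ref{prop2.1}, yields $R\nu_k\to R\mu_c$ q.e.\ after passing to the appropriate subsequence.

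For part~(ii) I would argue in the reverse direction. From condition~(b) we have, for each $k$, the identity $-LT_ku=f_u\cdot m+\mu_d+\nu_k$ in the variational sense against bounded $v\in D_e(\EE)$; since the left side is $-LT_ku$ and the right side is a bounded smooth measure, $T_ku$ is the (unique) probabilistic/weak solution of that linear equation, so by the already-established theory $T_ku(X_t)$ has the semimartingale decomposition \eqref{eq2.2} with $\mu_d$ replaced by $\mu_d+$(the part of $\nu_k$). Letting $k\to\infty$, $T_ku\to u$ pointwise q.e., the local MAFs and the finite-variation drifts stabilize off the sets $\{|u|=k\}$, and one assembles a single local MAF $M$ giving \eqref{eq2.2} for $u$ itself with $f_u$ and $\mu_d$; here one uses $f_u\in L^1(E;m)$ so that $\int_0^t f_u(X_s)\,ds$ is well defined $P_x$-a.s.\ for q.e.\ $x$. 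Finally, to verify condition~(b) of Definition~\ref{def3.2}, take a reducing sequence $\{\tau_k\}$ for $u$; along it $E_x u(X_{\tau_k})$ equals, by the stabilized decomposition, the limit of $E_x T_j u(X_{\tau_k})=R\nu_j(x)+(\text{$L^1$-drift terms that vanish in the limit})$, and the hypothesis $R\nu_j\to R\mu_c$ q.e.\ forces $E_x u(X_{\tau_k})\to R\mu_c(x)$, which is \eqref{eq3.3}.

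The main obstacle, and the place where Lemma~\ref{lem4.2} and the (ACT) hypothesis enter, is the well-posedness of the measures $\nu_k$ and the passage to the limit in $k$: one must show that the $\nu_k$ arising from truncation are genuinely diffuse and of uniformly bounded total variation, and that the concentrated part really is $\mu_c$ independently of $k$ and of the chosen renormalizing sequence. The delicate point is that $R\nu_k\to R\mu_c$ q.e.\ does not obviously imply $\|\nu_k-\nu_j\|_{TV}\to0$ (indeed it need not, when $\mu_c\neq0$), so uniqueness of the concentrated data must be extracted from the potential-theoretic comparison of Lemma~\ref{lem4.2} rather than from a norm estimate; controlling the boundary terms $u\fch_{\{|u|=k\}}$ in the truncation calculus, and showing they do not leak mass into the concentrated part, is the technical heart of the argument.
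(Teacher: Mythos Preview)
Your sketch has a conceptual misstep in part~(i) that would derail the argument if carried out literally. You write that for $-LT_ku$ ``the concentrated part must coincide with $\mu_c$ for every $k$ because truncation does not affect the singular behaviour at polar sets.'' This is exactly backwards: since $T_ku$ is bounded and lies in $D_e(\EE)$, the measure $-LT_ku$ is \emph{smooth}; its concentrated part is zero. The whole mechanism of the renormalized definition is that the singular part $\mu_c$ is \emph{not} seen at any finite truncation level and only re-emerges in the limit $R\nu_k\to R\mu_c$. If $-LT_ku$ genuinely carried $\mu_c$, then $\nu_k:=-LT_ku-(f_u\cdot m+\mu_d)$ would not belong to $\MM_{0,b}$ and (\ref{eq3.20}) would fail. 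The paper obtains the correct picture probabilistically: apply the Tanaka--Meyer/It\^o formula to $u^{\pm}(X)$ and then to $(u^{\pm}\wedge k)(X)$, pass to the dual predictable projections (compensators) of the resulting increasing processes, and let $b^{i},b^{i,k}$ be the smooth Revuz measures of those compensators. Lemma~\ref{lem4.2} is invoked precisely here, to show $b^{i},b^{i,k}\in\MM_{0,b}$ from the potential bound $Rb^{i}\le R\mu_c^{\pm}+R(|f_u|\cdot m+|\mu_d|)$; it is not used, as you suggest, to ``extract uniqueness of the concentrated data.'' With $\nu_k$ built explicitly from these measures one gets $T_ku=R(f_u\cdot m+\mu_d)+R\nu_k$, and since $u=R(f_u\cdot m+\mu_d)+R\mu_c$ by Proposition~\ref{prop3.4}, the convergence $R\nu_k\to R\mu_c$ follows immediately from $T_ku\to u$, without any appeal to reducing sequences or subsequences.

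Your part~(ii) is workable but far more laborious than necessary. The paper does not attempt to assemble the semimartingale decomposition (\ref{eq2.2}) for $u$ by passing to the limit in the decompositions of $T_ku$, nor does it verify condition~(b) of Definition~\ref{def3.2} directly via reducing sequences. Instead it simply notes that (\ref{eq3.20}) means $T_ku$ is the duality (hence probabilistic) solution of $-L(T_ku)=f_u+\mu_d+\nu_k$ with smooth right-hand side, so $T_ku=R(f_u\cdot m+\mu_d)+R\nu_k$ q.e.; letting $k\to\infty$ and using $R\nu_k\to R\mu_c$ gives (\ref{eq1.4}), and Proposition~\ref{prop3.4} then delivers the probabilistic solution for free. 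Routing through Definition~\ref{def3.1} in this way avoids all the limit-of-local-MAF and reducing-sequence manipulations you outline.
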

\begin{proof} (i) Let $Y_t=u(X_t)$, $t\ge0$. By (\ref{eq2.2}), for q.e. $x\in E$,
\begin{equation}
\label{eq4.3} Y_t=Y_0-\int_0^tf_u(X_s)\,ds
-\int^t_0dA^{\mu_d}_s+\int_0^t\,dM_s, \quad t\ge 0,\quad
P_x\mbox{-a.s.}
\end{equation}
By It\^o's formula for convex functions (see, e.g., \cite[Theorem
IV.66]{Pr}),
\begin{equation}
\label{eq2.4}
u^+(X_t)-u^+(X_0)=\int^t_{0}\fch_{\{Y_{s-}>0\}}\,dY_s+A^1_t,\quad
t\ge0,
\end{equation}
\begin{equation}
\label{eq4.4}
u^-(X_t)-u^-(X_0)=-\int^t_{0}\fch_{\{Y_{s-}\le0\}}\,dY_s+A^2_t,\quad
t\ge0
\end{equation}
for some increasing processes $A^1,A^2$. By \cite[Remark 3.10]{K:CVPDE}, there is a reducing sequence $\{\tau_k\}$ for $u$. Since $M$ is a local martingale under $P_x$ for q.e. $x\in E$,  for q.e. $x\in E$ there exists a sequence of stopping times  $\{\sigma_n\}$ (possibly depending on $x$) such that $E_x\int^{t\wedge\sigma_n}_0\fch_{\{Y_{s-}\le0\}}\,dM_s=0$, $t\ge0$, $n\ge1$.
Therefore, by (\ref{eq4.3}) and (\ref{eq2.4}),
\[
E_xA^1_{\tau_k\wedge\sigma_n}
=E_xu^+(X_{\tau_k\wedge\sigma_n})-u^+(x)+E_x\int^{\tau_k\wedge\sigma_n}_0\fch_{\{Y_{s-}>0\}}
(f_u(X_s)\,ds+dA^{\mu_d}_s)
\]
for all $k,n\ge1$. Letting $n\rightarrow\infty$ we get
\[
E_xA^1_{\tau_k}
=E_xu^+(X_{\tau_k})-u^+(x)+E_x\int^{\tau_k}_0\fch_{\{Y_{s-}>0\}}
(f_u(X_s)\,ds+dA^{\mu_d}_s).
\]
Similarly, by  (\ref{eq4.3}) and (\ref{eq4.4}),
\[
E_xA^2_{\tau_k} =E_xu^-(X_{\tau_k})-u^-(x)
-E_x\int^{\tau_k}_0\fch_{\{Y_{s-}\le0\}}
(f_u(X_s)\,ds+dA^{\mu_d}_s).
\]
Letting $k\rightarrow\infty$ in the above two equalities and using
(\ref{eq3.4}) shows that for q.e. $x\in E$,
\[
E_xA^1_{\zeta}\le R\mu^+_c(x)
+E_x\int^{\zeta}_0(|f_u(X_t)|\,ds+dA^{|\mu_d|}_t) =
R\mu^+_c(x)+R(|f_u|\cdot m+|\mu_d|)(x),
\]
\[
E_xA^2_{\zeta}\le R\mu^-_c(x)
+E_x\int^{\zeta}_0(|f_u(X_t)|\,ds+dA^{|\mu_d|}_t) =
R\mu^-_c(x)+R(|f_u|\cdot m+|\mu_d|)(x).
\]
By this and Proposition \ref{prop2.1},
$E_x(A^1_{\zeta}+A^2_{\zeta})<+\infty$ for q.e. $x\in E$.
Therefore by \cite[Theorem A.3.16]{FOT} there exists positive AFs
of $B^1,B^2$ of $\BBM$  such that $B^i$, $i=1,2$, is a compensator
of $A^i$ under $P_x$ for q.e. $x\in E$.  The processes $B^{1},
B^{2}$ are increasing, because  $A^{1}$ and $A^{2}$ are
increasing.  Since by \cite[Theorem A.3.2]{FOT} the process $X$
has no predictable jumps, it follows from \cite[Theorem
A.3.5]{FOT} that $B^{1}, B^{2}$ are continuous. Thus $B^{1},
B^{2}$ are increasing continuous AFs of $\BBM$ such that $A^{i}-B^{i}$,
$i=1,2$, is a martingale under $P_x$ for q.e. $x\in E$. Let
$b^i\in S$, $i=1,2$, denote the measure corresponding to $B^i$ in
the Revuz sense. Then, by (\ref{eq2.1}),
\[
Rb^i(x)=E_xB^i_{\zeta}=E_xA^i_{\zeta}<+\infty,\quad i=1,2,
\]
for q.e. $x\in E$. From this and Lemma \ref{lem4.2} it follows
that $b^1,b^2\in\MM_{0,b}$. By It\^o's formula, for $k>0$ we have
\begin{equation}
\label{eq2.6} (u^+\wedge k)(X_t)-(u^+\wedge k)(X_0)
=\int^t_0\fch_{\{u^+(X_{s-})\le k\}}\,du^+(X_s)-A^{1,k}_t,\quad
t\ge0,
\end{equation}
\begin{equation}
\label{eq4.7} (u^-\wedge k)(X_t)-(u^-\wedge k)(X_0)
=\int^t_0\fch_{\{u^-(X_{s-})\le k\}}\,du^-(X_s)-A^{2,k}_t,\quad
t\ge0,
\end{equation}
for some increasing processes $A^{1,k}, A^{2,k}$. By (\ref{eq2.4})
and (\ref{eq2.6}),
\[
E_xA^{1,k}_t\le u^+(x)\wedge k +E_x\int^t_0\fch_{\{u^+(X_{s-})\le
k\}}\fch_{\{Y_{s-}>0\}}\,dY_s +E_x\int^t_0\fch_{\{u^+(X_{s-})\le
k\}}\,dA^1_s
\]
whereas by (\ref{eq4.4}) and (\ref{eq4.7}),
\[
E_xA^{2,k}_t\le u^-(x)\wedge k -E_x\int^t_0\fch_{\{u^-(X_{s-})\le
k\}}\fch_{\{Y_{s-}\le0\}}\,dY_s +E_x\int^t_0\fch_{\{u^-(X_{s-})\le
k\}}\,dA^2_s.
\]
By the above two inequalities,
\[
E_x(A^{1,k}_{\zeta}+A^{2,k}_{\zeta})\le u^+(x)\wedge k
+u^-(x)\wedge k +R(|f_u|\cdot m+|\mu_d|)(x) +R(b^1+b^2)(x).
\]
Hence $E_x(A^{1,k}_{\zeta}+A^{2,k}_{\zeta})<+\infty$ for q.e.
$x\in E$. Let $B^{1,k},B^{2,k}$ be positive AFs of $\BBM$  such
that $B^{i,k}$, $i=1,2$, is a compensator of $A^{i,k}$ under $P_x$
for q.e. $x\in E$. As in case of $B^{1}, B^{2}$, we show that
$B^{1,k}, B^{2,k}$ increasing continuous AFs of $\BBM$ such that
$A^{i,k}-B^{i,k}$, $i=1,2$, is a martingale under $P_x$ for q.e.
$x\in E$. Let $b^{i,k}\in S$, $i=1,2$, denote the measure
corresponding to $B^{i,k}$ in the Revuz sense. Then
$R(b^{1,k}+b^{2,k})(x)=E_x(A^{1,k}_{\zeta}+A^{2,k}_{\zeta})<+\infty$
for q.e. $x\in E$, and hence, by Lemma \ref{lem4.2}, that
$b^{1,k},b^{2,k}\in\MM_{0,b}$. Let $Y^k_t=T_ku(X_t)$. Since
$T_ku=(u^+\wedge k)-(u^-\wedge k)$, from
(\ref{eq4.3})--(\ref{eq4.7}) we get
\begin{align}
\label{eq4.8} Y^k_t-Y^k_0&=-\int^t_0\fch_{\{-k\le Y_{s-}\le
k\}}(f_u(X_s)\,ds+dA^{\mu_d}_s)
-B^{1,k}_t\nonumber\\
&\quad+\int^t_0\fch_{\{u^+(X_{s})\le k\}}\,dB^1_s
+B^{2,k}_t-\int^t_0\fch_{\{u^-(X_{s})\le k\}}\,dB^2_s +M^k_t,
\end{align}
where
\begin{align*}
M^k_t&=\int^t_0\fch_{\{-k\le Y_{s-}\le k\}}\,dM_s
-(A^{1,k}_t-B^{1,k}_t) +(A^{2,k}_t-B^{2,k}_t)\\
&\quad +\int^t_0\fch_{\{u^+(X_{s-})\le k\}}\,d(A^1_s-B^1_s)
-\int^t_0\fch_{\{u^-(X_{s-})\le k\}}\,d(A^2_s-B^2_s).
\end{align*}
Since $M^k$ is a martingale under $P_x$ for q.e. $x\in E$, from
(\ref{eq4.8}) it follows that for q.e. $x\in E$,
\begin{align*}
T_ku(x)&=E_xT_k(X_t)+E_x\int^t_0\fch_{\{-k\le Y_{s-}\le
k\}}(f_u(X_s)\,ds+dA^{\mu_d}_s)\\
&\quad+E_xB^{1,k}_t-E_x\int^t_0\fch_{\{u^+(X_{s})\le k\}}\,dB^1_s
-E_xB^{2,k}_t+E_x\int^t_0\fch_{\{u^-(X_{s})\le k\}}\,dB^2_s.
\end{align*}
Since $T_ku(X_t)\rightarrow0$ $P_x$-a.s. as $t\rightarrow\infty$,
$E_xT_ku(X_t)\rightarrow0$ by the Lebesgue dominated convergence
theorem. Therefore from the above equality it follows that
\[
T_ku(x)=R(\fch_{\{-k\le u\le k\}}(f_u\cdot m+\mu_d))
+R(b^{1,k}-\fch_{\{u^+\le k\}}b^1)-R(b^{2,k}-\fch_{\{u^-\le k\}}
b^2).
\]
Set
\[
\nu_k=\fch_{\{u\notin[-k,k]\}}(f_u\cdot m+\mu_d)
+b^{1,k}-\fch_{\{u^+\le k\}}b^1-b^{2,k}+\fch_{\{u^-\le k\}}b^2.
\]
Then $\nu_k\in\MM_{0,b}$ and for q.e. $x\in E$,
\begin{equation}
\label{eq2.3} T_ku(x)=R(f_u\cdot m+\mu_d)(x) +R\nu_k(x).
\end{equation}
On the other hand, by  Proposition \ref{prop3.4},
$u(x)=R(f_u\cdot m+\mu_d)(x) +R\mu_c(x)$ for q.e. $x\in E$. Hence
$R\nu_k(x)\rightarrow R\mu_c(x) $ for q.e. $x\in E$. By Remark
\ref{rem2.2}(ii),  $T_ku\in D_e(\EE)$. Finally, since
$T_ku=R\lambda_k$ with $\lambda_k=f_u\cdot m+\mu_d
+\nu_k\in\MM_{0,b}$, repeating step by step the reasoning
following \cite[(3.14)]{KR:NoD} shows  that $T_ku$ satisfies
(\ref{eq3.20}), which completes the proof of (i).

(ii) Assume that $u$ is a renormalized solution of (\ref{eq1.1}).
Then $T_ku$ is a solution in the sense of duality of the linear
equation
\[
-L(T_ku)=f_u+\mu_d+\nu_k,
\]
and hence $T_ku$ is a probabilistic solution of the above equation
(see the arguments in \cite[p. 1924]{KR:NoD}). Hence
\[
T_ku(x)=E_x\Big(\int^{\zeta}_0(f_u(X_t)\,dt
+dA^{\mu_d}_t)+\int^{\zeta}_0dA^{\nu_k}_t\Big)=R(f_u\cdot
m+\mu_d)(x)+R\nu_k(x)
\]
for q.e. $x\in E$. Since $R\nu_k\rightarrow R\mu_c$ q.e., letting
$k\rightarrow\infty$ in the above equation we see that
(\ref{eq1.4}) is satisfied for q.e. $x\in E$, i.e. $u$ is a
solution of (\ref{eq1.1}) in the sense of Definition \ref{def3.1}.
By this and Proposition \ref{prop3.4}, $u$ is a probabilistic
solution of (\ref{eq1.1}).
\end{proof}

Note that by Proposition \ref{prop3.4}, in the formulation of
Theorem \ref{th4.3} we may replace ``probabilistic solution" by
``solution in the sense of Definition \ref{def3.1}", while by
\cite[Proposition 4.12]{K:CVPDE} we may replace ``probabilistic
solution" by ``solutions in the sense of Stampacchia".

By Theorem \ref{th4.3}, a probabilistic solution $u$ is a
renormalized solution once we know that $f_u\in L^1(E;m)$. We
close this section with describing some  interesting situations in
which  this condition holds true.

\begin{proposition}
\label{prop4.4} Let $\mu\in\MM_b$ and let
$f:E\times\BR\rightarrow\BR$ be a measurable function such that
$f(\cdot,0)\in L^1(E;m)$ and for every $x\in E$ the mapping
$\BR\ni y\mapsto f(x,y)$ is continuous and nonincreasing. If $u$
is a probabilistic solution of \mbox{\rm(\ref{eq1.1})}  then
$f_u\in L^1(E;m)$.
\end{proposition}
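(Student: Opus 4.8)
The plan is to exploit the sign structure: since $y\mapsto f(x,y)$ is nonincreasing, for every $k>0$ we have a pointwise inequality comparing $f(x,T_ku(x))$ with $f(x,0)$ together with a controllable error, and the truncations $T_ku$ are well-behaved thanks to Remark~\ref{rem2.2}. The idea is to test the linear equation solved by $T_ku$ against $\mathrm{sgn}(u)$ (more precisely, against a bounded approximation of the sign function supported where $|u|\le k$), integrate, and let $k\to\infty$; monotone convergence then yields a uniform $L^1$ bound on $f_u$ via Fatou.

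First I would recall from the proof of Theorem~\ref{th4.3}(i), or rather from its ingredients, that a probabilistic solution $u$ satisfies $u(x)=R(f_u\cdot m+\mu_d)(x)+R\mu_c(x)$ q.e.\ and that $T_ku$ is a probabilistic (equivalently, duality) solution of $-L(T_ku)=f_u\fch_{\{-k\le u\le k\}}+\mu_d^k+\nu_k$ for suitable measures with total variation controlled by $\|\mu\|_{TV}$ plus the Revuz measures $b^{i},b^{i,k}$ that arose there — but a cleaner route is to work directly with the Feynman--Kac representation. Writing $Y_t=u(X_t)$ and using the It\^o/Tanaka decomposition already displayed in \eqref{eq2.4}, \eqref{eq4.4}, I would derive, for a reducing sequence $\{\tau_k\}$, an identity of the form
\[
E_x\int_0^{\tau_k}\fch_{\{|Y_s|\le k\}}\,\mathrm{sgn}(Y_s)\,f_u(X_s)\,ds
= -E_x\int_0^{\tau_k}\fch_{\{|Y_s|\le k\}}\,\mathrm{sgn}(Y_s)\,dA^{\mu_d}_s + (\text{boundary terms}) + (\text{increasing-process terms}),
\]
where all the terms on the right are bounded by $R|\mu_d|(x)+R|\mu_c|(x)+R(b^1+b^2)(x)$, which is finite q.e.\ by Proposition~\ref{prop2.1} and Lemma~\ref{lem4.2}. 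Since $\mathrm{sgn}(Y_s)f_u(X_s)=\mathrm{sgn}(u(X_s))f(X_s,u(X_s))$ and $f(x,y)\,\mathrm{sgn}(y)\le f(x,0)\,\mathrm{sgn}(y)\le |f(x,0)|$ by monotonicity, the left-hand side is bounded below by $-E_x\int_0^{\tau_k}|f(X_s,0)|\,ds\ge -R|f(\cdot,0)\cdot m|(x)$, while $|f_u|\fch_{\{|u|\le k\}}=\mathrm{sgn}(u)f_u\fch_{\{|u|\le k\}} + 2(f_u)^-\fch_{\{u>0,\,|u|\le k\}} + \dots$; the monotonicity lets me bound the "wrong-sign" part of $f_u$ on $\{|u|\le k\}$ by $2|f(\cdot,0)|$ as well, so that $E_x\int_0^{\zeta}|f_u(X_s)|\fch_{\{|u(X_s)|\le k\}}\,ds$ is bounded uniformly in $k$ and $x$ (after integrating $m(dx)$, or just q.e.). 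Letting $k\to\infty$ and applying the monotone convergence theorem gives $R(|f_u|\cdot m)(x)<\infty$ q.e., and then integrating against $m$ — using transience so that $R1\in L^1_{loc}$ or the argument of Lemma~\ref{lem4.2} with $g_n$ — upgrades this to $\int_E|f_u|\,dm<\infty$, i.e.\ $f_u\in L^1(E;m)$.

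The main obstacle I anticipate is bookkeeping the boundary terms $E_xu^\pm(X_{\tau_k})$ and the increasing processes $B^{i},B^{i,k}$ uniformly in $k$: one must check that the constants produced do not blow up as $k\to\infty$, and that the reducing-sequence limits \eqref{eq3.4} can be invoked even though $f_u$ is not yet known to be integrable (this is why one truncates and only passes to the limit at the very end). A secondary subtlety is justifying the sign-function test: since $\mathrm{sgn}$ is discontinuous, one should replace it by $\psi_\varepsilon(y)=\min\{y^+/\varepsilon,1\}-\min\{y^-/\varepsilon,1\}$, note $\psi_\varepsilon(T_ku)$ is a bounded element of $D_e(\EE)$, run the argument, and let $\varepsilon\downarrow0$ before $k\to\infty$ — the monotonicity of $f$ in $y$ is exactly what makes the $\varepsilon$-limit of $\int f_u\,\psi_\varepsilon(u)\,dm$ controllable. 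Everything else (finiteness of the relevant $R\mu$'s, quasi-continuity of $u$ and $T_ku$, membership $T_ku\in D_e(\EE)$) is supplied verbatim by Proposition~\ref{prop2.1}, Proposition~\ref{prop3.4}, and Remark~\ref{rem2.2}.
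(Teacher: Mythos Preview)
The paper does not give a proof here; it simply refers to \cite[Proposition~4.8]{K:CVPDE}. Your overall strategy---exploit the monotonicity of $f(x,\cdot)$ to relate $|f_u|$ to $-\mathrm{sgn}(u)f_u$, extract the latter from a Tanaka-type decomposition of $|u(X_t)|$, and finish with Lemma~\ref{lem4.2}---is the correct one and, when carried out cleanly, yields exactly the bound~\eqref{eq4.21}. Two points in your write-up need repair, however.

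First, there is a circularity: you appeal to Remark~\ref{rem2.2}(ii) to obtain $T_ku\in D_e(\EE)$ and then use $\psi_\varepsilon(T_ku)$ as a test function, but that remark explicitly \emph{assumes} $f_u\in L^1(E;m)$, which is precisely what you are proving. The variational test-function route is therefore not available here.

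Second, you propose to bound the ``increasing-process terms'' by $R(b^1+b^2)$ and control $b^1,b^2$ via Lemma~\ref{lem4.2}; but the estimate for $Rb^i$ obtained in the proof of Theorem~\ref{th4.3}(i) already carries $R(|f_u|\cdot m)$ on the right-hand side, so this too is circular. The remedy is simpler than your sketch: apply Tanaka's formula directly to $|Y_t|=|u(X_t)|$, note that the resulting increasing process $A^1+A^2$ has the \emph{favourable} sign and may be dropped, and deduce that for any reducing sequence $\{\tau_n\}$,
\[
-E_x\int_0^{\tau_n}\mathrm{sgn}(Y_s)\,f_u(X_s)\,ds
\;\le\; E_x|u(X_{\tau_n})|-|u(x)|+E_x\int_0^{\tau_n}dA^{|\mu_d|}_s.
\]
Let $n\to\infty$ using \eqref{eq3.4}; since monotonicity gives $-\mathrm{sgn}(u)f_u\ge|f_u|-2|f(\cdot,0)|$ and the negative part of the integrand is dominated by $|f(\cdot,0)|\in L^1(E;m)$, Fatou's lemma yields
\[
R(|f_u|\cdot m)(x)\;\le\;R\big(|\mu|+2|f(\cdot,0)|\cdot m\big)(x)\quad\text{for q.e. }x\in E,
\]
and Lemma~\ref{lem4.2} then gives $f_u\in L^1(E;m)$. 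No truncations $T_k$, no auxiliary measures $b^{i},b^{i,k}$, and no $D_e(\EE)$-membership are needed.
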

\begin{proof}
See \cite[Proposition 4.8]{K:CVPDE}.
\end{proof}

Following \cite{BMP,K:CVPDE}  we call $\mu\in\BM$ a good measure
(relative to $L$ and $f$) if there exists a probabilistic solution
of (\ref{eq1.1}).

\begin{proposition}
\label{prop4.5} Assume that $f$ satisfies the assumptions of
Proposition \ref{prop4.4} and $\mu\in\BM$ is good relative to $L$
and $f$. Then there exists a unique renormalized solution of
\mbox{\rm(\ref{eq1.1})}. Moreover, for every $k\ge0$,
\begin{equation}
\label{eq4.20} \EE(T_ku,T_ku)\le
k(\|\mu\|_{TV}+\|f_u\|_{L^1(E;m)}),
\end{equation}
\begin{equation}
\label{eq4.21} \|f_u\|_{L^1(E;m)}\le
2\|f(\cdot,0)\|_{L^1(E;m)}+\|\mu\|_{TV}.
\end{equation}
\end{proposition}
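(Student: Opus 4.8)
The plan is to treat existence, uniqueness and the two estimates in turn. \emph{Existence} is immediate: since $\mu$ is good there is a probabilistic solution $u$ of \eqref{eq1.1}; the hypotheses on $f$ are exactly those of Proposition \ref{prop4.4}, so (as $\mu$ is bounded) $f_u\in L^1(E;m)$; hence Theorem \ref{th4.3}(i) shows that $u$ is a renormalized solution.

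\emph{Uniqueness.} Let $u_1,u_2$ be renormalized solutions. By Theorem \ref{th4.3}(ii) each is a probabilistic solution, hence by Proposition \ref{prop3.4} each satisfies \eqref{eq1.4}; subtracting and setting $g:=f_{u_1}-f_{u_2}\in L^1(E;m)$ gives $w:=u_1-u_2=R(g\cdot m)$, so by Proposition \ref{prop3.4} again $w$ is a probabilistic solution of $-Lw=g\cdot m$, and subtracting the two identities \eqref{eq2.2} yields $w(X_t)=w(X_0)-\int_0^tg(X_s)\,ds+\int_0^t dM_s$ for a local MAF $M$ (the $A^{\mu_d}$-terms cancel). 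I would apply It\^o--Tanaka's formula to $t\mapsto|w(X_t)|$; monotonicity of $y\mapsto f(x,y)$ forces
\[
\sgn(w(X_s))\,g(X_s)=\sgn\big(u_1(X_s)-u_2(X_s)\big)\big(f(X_s,u_1(X_s))-f(X_s,u_2(X_s))\big)\le0
\]
for a.e.\ $s$, so $|w(X_t)|\ge|w(X_0)|+\int_0^t\sgn(w(X_{s-}))\,dM_s$. Evaluating at a reducing sequence $\{\tau_k\}$ for $w$, taking $E_x$ (which kills the stochastic integral) and letting $k\to\infty$, the term $E_x|w(X_{\tau_k})|$ tends to $0$ by \eqref{eq3.4} (here $(g\cdot m)_c=0$); hence $0\ge|w(x)|$ for q.e.\ $x$, i.e.\ $u_1=u_2$ q.e. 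Together with existence this gives uniqueness of the renormalized solution.

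\emph{The estimates.} Let $u$ be the renormalized solution and $\{\nu_k\}$ the sequence from condition (b) of the definition; it is determined by $u$ and $k$ through \eqref{eq3.20}, and $T_ku=R(f_u\cdot m+\mu_d+\nu_k)$. Taking $v=T_ku$ in \eqref{eq3.20} (admissible, as $T_ku$ is bounded and in $D_e(\EE)$) gives $\EE(T_ku,T_ku)=\langle f_u\cdot m+\mu_d+\nu_k,\widetilde{T_ku}\rangle$, hence, since $|\widetilde{T_ku}|\le k$ q.e.,
\[
\EE(T_ku,T_ku)\le k\big(\|f_u\|_{L^1(E;m)}+\|\mu_d\|_{TV}+\|\nu_k\|_{TV}\big),
\]
and \eqref{eq4.20} follows once one knows $\|\nu_k\|_{TV}\le\|\mu_c\|_{TV}$, because $\|\mu\|_{TV}=\|\mu_d\|_{TV}+\|\mu_c\|_{TV}$. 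For \eqref{eq4.21} one tests \eqref{eq3.20} with $v=k^{-1}T_ku$, uses $\EE(T_ku,T_ku)\ge0$ and the sign relation $(f(x,u(x))-f(x,0))\,k^{-1}T_ku(x)\le0$ (monotonicity, $k^{-1}T_ku$ having the sign of $u$) to get $\int_E|f_u-f(\cdot,0)|\,|k^{-1}T_ku|\,dm\le\|f(\cdot,0)\|_{L^1(E;m)}+\|\mu_d\|_{TV}+\|\nu_k\|_{TV}$, and lets $k\downarrow0$; more directly, \eqref{eq4.21} is the quantitative form of Proposition \ref{prop4.4} and is contained in the proof of \cite[Proposition 4.8]{K:CVPDE}.

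\emph{The main obstacle} is the variation bound $\|\nu_k\|_{TV}\le\|\mu_c\|_{TV}$ (equivalently $\|f_u\cdot m+\mu_d+\nu_k\|_{TV}\le\|f_u\|_{L^1(E;m)}+\|\mu\|_{TV}$) for the approximating measures. Since $\nu_k=-L(T_ku)-f_u\cdot m-\mu_d$ is rigidly tied to the truncations, proving it calls for the analysis of $T_ku(X_\cdot)$ carried out in the proof of Theorem \ref{th4.3}(i): write $T_ku=(u^+\wedge k)-(u^-\wedge k)$, represent $T_ku(X_\cdot)$ by It\^o's formula for convex functions, and check that the positive and negative parts of the resulting increasing compensators have Revuz measures whose $0$-potentials are dominated by $R\mu^+_c$ and $R\mu^-_c$; Lemma \ref{lem4.2} is exactly the tool converting such a domination of potentials into an inequality of total variations. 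Alternatively, since Theorem \ref{th4.3} identifies renormalized with probabilistic solutions under the present hypotheses, \eqref{eq4.20} may be imported from \cite[Theorem 3.3]{K:CVPDE} and \eqref{eq4.21} from \cite[Proposition 4.8]{K:CVPDE}. In all approaches the variation bound on $\nu_k$ is the single nontrivial point.
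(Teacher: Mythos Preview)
The paper's own proof is entirely by citation: existence from Theorem~\ref{th4.3}(i) together with Proposition~\ref{prop4.4}; uniqueness from Theorem~\ref{th4.3}(ii) together with \cite[Corollary~4.3]{K:CVPDE}; estimate \eqref{eq4.20} from \cite[Theorem~3.3]{K:CVPDE}; estimate \eqref{eq4.21} from \cite[Proposition~4.8]{K:CVPDE}. Your existence argument coincides with the paper's, and your direct It\^o--Tanaka uniqueness argument is a reasonable unpacking of what the cited corollary does; the paper simply does not reprove it. Your ``alternative'' route for the estimates---importing them from \cite{K:CVPDE} after identifying renormalized and probabilistic solutions via Theorem~\ref{th4.3}---is precisely the paper's route, so in the end your proposal lands on the same proof.

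There is, however, a genuine error in your primary attack on \eqref{eq4.20}. The inequality $\|\nu_k\|_{TV}\le\|\mu_c\|_{TV}$ is false in general. Take $\mu_c=0$ and any data for which the solution $u$ is unbounded (e.g.\ $f\equiv0$ and a smooth $\mu=\mu_d$ whose potential $R\mu_d$ is unbounded); then for every finite $k$ with $m(\{|u|>k\})>0$ one has $T_ku\neq u$, hence $\nu_k=-L(T_ku)-f_u\cdot m-\mu_d\neq0$, while $\|\mu_c\|_{TV}=0$. Your proposed rescue via Lemma~\ref{lem4.2} does not go through either: the bounds obtained in the proof of Theorem~\ref{th4.3}(i) are
\[
Rb^1\le R\mu_c^{+}+R(|f_u|\cdot m+|\mu_d|),\qquad Rb^2\le R\mu_c^{-}+R(|f_u|\cdot m+|\mu_d|),
\]
not $Rb^i\le R\mu_c^{\pm}$, and the further pieces $b^{i,k}$, $\fch_{\{|u|>k\}}(f_u\cdot m+\mu_d)$ that make up $\nu_k$ carry their own variation. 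So the ``single nontrivial point'' you isolate is not merely nontrivial---it is the wrong target; the clean way to \eqref{eq4.20} is exactly the import from \cite[Theorem~3.3]{K:CVPDE} that you list as an alternative (and which the paper adopts). The same remark applies to your $k\downarrow0$ approach to \eqref{eq4.21}: as $k\downarrow0$ one has $T_ku\to0$ and hence $\nu_k\to-(f_u\cdot m+\mu_d)$, so $\|\nu_k\|_{TV}$ does not vanish and the inequality you write does not close up to \eqref{eq4.21}; again the citation to \cite[Proposition~4.8]{K:CVPDE} is the correct argument.
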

\begin{proof}
The existence of a solution follows immediately from Theorem
\ref{th4.3}(i) and Proposition \ref{prop4.4}. Uniqueness follows
from Theorem \ref{th4.3}(ii) and \cite[Corollary 4.3]{K:CVPDE}.
Estimate (\ref{eq4.20}) follows from \cite[Theorem 3.3]{K:CVPDE},
whereas (\ref{eq4.21}) from \cite[Proposition 4.8]{K:CVPDE}.
\end{proof}

The following remark shows that the monotonicity assumption
imposed on $f$ in Propositions \ref{prop4.4} and \ref{prop4.5} can
be relaxed in case $\mu$ is nonnegative.

\begin{remark}
(i) Assume that $\mu\in\BM$ is nonnegative and $f$ satisfies the
following ``sign condition": for every $x\in E$,
\begin{equation}
\label{eq4.10} yf(x,y)\le0, \quad y\in\BR.
\end{equation}
Then if $u$ is a probabilistic solution of (\ref{eq1.1}), then
$u\ge0$ q.e. To see this, let us consider a reducing sequence
$\{\tau_k\}$ for $u$. Then by (\ref{eq4.3}), (\ref{eq4.4}) and
It\^o's formula for convex functions (see \cite[Theorem
IV.66]{Pr}), for q.e. $x\in E$ we have
\[
u^-(x)=E_xu^-(X_{\tau_k})-\int^{\tau_k}_0\fch_{\{Y_{s-}\le0\}}f(X_s,Y_{s})\,ds
-\int^{\tau_k}_0\fch_{\{Y_{s-}\le0\}}\,dA^{\mu_d}_s-E_xA^2_{\tau_k}.
\]
Since $\mu\ge0$, $\mu_d\ge0$ and $\mu_c\ge0$. In particular,
$A^{\mu_d}$ is increasing. Since $A^2$ is also increasing and $f$
satisfies (\ref{eq4.10}), it follows that $ u^-(x)\le
E_xu^-(X_{\tau_k})$. By this and  (\ref{eq3.4}),
$u(x)\le\lim\sup_{k\rightarrow\infty}E_xu^-(X_{\tau_k})=R\mu^-_c(x)=0$
for q.e. $x\in E$.\smallskip\\
(ii) Obviously  (\ref{eq4.10}) is satisfied if $f(x,0)=0$ and $f$
is nonincreasing. Therefore if $\mu$ in Proposition \ref{prop4.4}
is nonnegative, then without loss of generality we may assume that
$f(\cdot,y)=0$ for $y\le0$, i.e. $f$ satisfies the condition
imposed on $f$ in \cite{BMP} (see \cite[Remark 1]{BMP}) and in
\cite[Section 5]{K:CVPDE}.
\smallskip\\
(iii) If $f$ satisfies (\ref{eq4.10}) and $\mu\in\MM^+_b$ is good
(relative to $L$ and $f$), then $f_u\in L^1(E;m)$, and hence there
exists a renormalized solution of (\ref{eq1.1}). Indeed, if
$\mu\ge0$ then by part (i), $u\ge0$ q.e., and consequently
$Rf_u+R\mu\ge0$ q.e. and  $f_u\le0$. Hence $0\le R(-f_u)=-Rf_u\le
R\mu$ q.e. By this and Lemma \ref{lem4.2}, $-f_u\cdot
m\in\MM^+_b$,  so $f_u\in L^1(E;m)$.
\end{remark}

The problem of existence of solutions of (\ref{eq1.1}) for $f$
satisfying the assumptions of Proposition \ref{prop4.4} (or more
general ``sign condition" (\ref{eq4.10})) and the related problem
of  characterizing the set of good measures are very subtle, and
are beyond the scope of the present paper. For many positive
results in this direction in the case where $A$ is the Laplace
operator we defer the reader to \cite{BMP,Po}. Interesting
existence and uniqueness results for equations  involving the
fractional Laplace operator are to be found in  \cite{K:CVPDE,CV}.
\smallskip\\
{\large\bf Acknowledgements}
\smallskip\\
This work was supported by the Polish National Science Centre under Grant\\
2012/07/B/ST1/03508.

\end{document}